\newtheorem{lemma}{Lemma}[section]
\newtheorem{theorem}{Theorem}[section]
\theoremstyle{definition}
\theoremstyle{remark}
\newtheorem{remark}{Remark}[section]
\date{}
\begin{document}

\title{ Fields $\mathbb{Q}(\sqrt[3]{d},\zeta_3)$  whose  $3$-class group is of
type $(9,3)$ }

\author{S. AOUISSI,  M. C. ISMAILI, M. TALBI  and A. AZIZI}

\maketitle

\medskip
\noindent
\textbf{Abstract:}
Let $\mathrm{k}=\mathbb{Q}(\sqrt[3]{d},\zeta_3)$, with $d$ a cube-free positive integer.
Let $C_{\mathrm{k},3}$ be the \(3\)-class group of \(\mathrm{k}\).
With the aid of genus theory,
arithmetic properties of the pure cubic field $\mathbb{Q}(\sqrt[3]{d})$
and some results on the $3$-class group $C_{\mathrm{k},3}$,
we determine all integers $d$ such that $C_{\mathrm{k},3}\simeq\mathbb{Z}/9\mathbb{Z}\times\mathbb{Z}/3\mathbb{Z}$.

\medskip
\noindent
\textbf{Keywords:}
 Pure cubic fields; $3$-class groups; structure of groups.\\
\medskip
\noindent
\textbf{Mathematics Subject Classification 2010:}
 11R11, 11R16, 11R20, 11R27, 11R29, 11R37.
 
\section{Introduction}
\paragraph*{} 
Let $d$ be a cube-free positive integer,
$\mathrm{k}=\mathbb{Q}(\sqrt[3]{d},\zeta_3)$,
and $C_{\mathrm{k},3}$ be the \(3\)-class group of $\mathrm{k}$.
A number of researchers have studied the $3$-class group $C_{\mathrm{k},3}$
and the calculation of its rank.
Calegari and Emerton \cite[Lemma 5.11]{Cal-Emer} proved that the rank of the $3$-class group of $\mathbb{Q}(\sqrt[3]{p},\zeta_3)$,
with a prime $p\equiv 1\pmod 9$,
is equal to two if $9$ divides the class number of $\mathbb{Q}(\sqrt[3]{p})$.
The converse of the Calegari-Emerton result was proved by Frank Gerth III in \cite[Theorem 1, p. 471]{GERTH3}.
\paragraph*{} 
The purpose of this paper is to classify all integers $d$ for which $C_{\mathrm{k},3}$ is of type $(9,3)$,
i.e. $C_{\mathrm{k},3}\simeq\mathbb{Z}/9\mathbb{Z}\times\mathbb{Z}/3\mathbb{Z}$.
This investigation can be viewed as a continuation of the previous more general works
\cite[Lemma 5.11]{Cal-Emer} and \cite[Theorem 1, p. 471]{GERTH3}.
Effectively, we shall prove the following main theorem:

\begin{theorem}\label{T39}
Let $\Gamma=\mathbb{Q}(\sqrt[3]{d})$ be a pure cubic field,
where $d\ge 2$ is a cube-free integer,
and let $\mathrm{k}=\mathbb{Q}(\sqrt[3]{d},\zeta_3)$ be its normal closure.
Denote by $u$ the index of the subgroup generated by the units of intermediate fields of the extension $\mathrm{k}/\mathbb{Q}$
in the group of units of $\mathrm{k}$.
\begin{itemize}
\item[1)]
If the field $\mathrm{k}$ has a $3$-class group of type $(9,3)$,
then $d=p^{e}$, where $p$ is a prime number congruent to $1\pmod 9$ and $e=1$ or $2$.
\item[2)]
If $p$ is a prime number congruent to $1\pmod 9$,
$9$ divides the class number of $\Gamma$ exactly, and $u=1$,
then the $3$-class group of $\mathrm{k}$ is of type $(9,3).$
\end{itemize}
\end{theorem}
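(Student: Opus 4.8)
The plan is to prove the two implications separately, using throughout the decomposition of $C_{\mathrm{k},3}$ under the action of $\mathrm{Gal}(\mathrm{k}/\Gamma)=\langle\tau\rangle$, where $\tau$ denotes complex conjugation. Since $[\mathrm{k}:\Gamma]=2$ is prime to $3$, the idempotents $\tfrac{1\pm\tau}{2}$ of $\mathbb{Z}_3[\langle\tau\rangle]$ split $C_{\mathrm{k},3}=C^{+}\oplus C^{-}$, and a transfer--norm argument identifies the fixed part with the $3$-class group of the pure cubic field, $C^{+}\simeq C_{\Gamma,3}$. I would combine this with two further tools: the Barrucand--Cohn class number relation $h_{\mathrm{k}}=\tfrac{1}{3}\,u\,h_{\Gamma}^{2}$ (so that $v_{3}(h_{\mathrm{k}})=2\,v_{3}(h_{\Gamma})+v_{3}(u)-1$, with $u\in\{1,3\}$), and the Chevalley ambiguous class number formula for the cyclic cubic Kummer extension $\mathrm{k}/k_{0}$, where $k_{0}=\mathbb{Q}(\zeta_{3})$ has class number one and unit group $\langle\zeta_{6}\rangle$.

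For the sufficiency (part 2) the argument is short. The hypothesis that $9$ divides $h_{\Gamma}$ exactly means $v_{3}(h_{\Gamma})=2$, so the class number relation together with $u=1$ gives $v_{3}(h_{\mathrm{k}})=2\cdot2+0-1=3$, i.e.\ $\lvert C_{\mathrm{k},3}\rvert=27$; here the point is only to check that $u=1$ yields exactly $27$ and not $81$. Independently, since $p\equiv1\pmod 9$ and $9\mid h_{\Gamma}$, the Calegari--Emerton result forces $\mathrm{rank}_{3}(C_{\mathrm{k},3})=2$. Finally, a finite abelian $3$-group of order $27$ and rank $2$ is necessarily of type $(9,3)$, which closes part 2.

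For the necessity (part 1) the work is heavier. Assuming $C_{\mathrm{k},3}\simeq\mathbb{Z}/9\mathbb{Z}\times\mathbb{Z}/3\mathbb{Z}$, I have $\mathrm{rank}_{3}(C_{\mathrm{k},3})=2$ and $\lvert C_{\mathrm{k},3}\rvert=27$. Writing $d=3^{a_{0}}\prod_{i}p_{i}^{a_{i}}\prod_{j}q_{j}^{b_{j}}$ with $p_{i}\equiv1$ and $q_{j}\equiv2\pmod 3$, I would first read the ramification in $\mathrm{k}/k_{0}$: each $p_{i}$ splits in $k_{0}$ and contributes two ramified primes, each $q_{j}$ stays inert and contributes one, and the prime $\lambda\mid 3$ contributes one precisely when it ramifies in $\mathrm{k}/k_{0}$. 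Using genus theory for the pure cubic field to control $\mathrm{rank}_{3}(C_{\Gamma,3})=\mathrm{rank}_{3}(C^{+})$, and the relation $\mathrm{rank}_{3}(C_{\mathrm{k},3})=\mathrm{rank}_{3}(C^{+})+\mathrm{rank}_{3}(C^{-})$, the constraint that the total rank equal $2$ already eliminates any $d$ with two or more prime divisors $\equiv1\pmod 3$ (these push the rank to at least $3$). The order constraint $v_{3}(h_{\mathrm{k}})=3$, fed back through the Barrucand--Cohn relation, then forces $v_{3}(h_{\Gamma})=2$ and $u=1$, so that $C^{+}\simeq C_{\Gamma,3}$ is cyclic of order $9$ and $C^{-}\simeq\mathbb{Z}/3\mathbb{Z}$; in particular $d$ carries exactly one prime divisor $\equiv1\pmod 3$ and cube-freeness gives $e\in\{1,2\}$, whence $\mathrm{k}$ is the same field for $d=p$ and $d=p^{2}$.

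The main obstacle is the systematic exclusion of the remaining factorisation types --- a single or multiple primes $q_{j}\equiv2\pmod 3$, possibly together with a factor of $3$ --- and the final refinement from $p\equiv1\pmod 3$ to $p\equiv1\pmod 9$. I would dispose of the former by combining the two numerical invariants in each case: for configurations built only from primes $\equiv2\pmod 3$ (or involving $3$), the Barrucand--Cohn valuation $v_{3}(h_{\mathrm{k}})=2\,v_{3}(h_{\Gamma})+v_{3}(u)-1$ and the genus-theoretic value of $v_{3}(h_{\Gamma})$ cannot simultaneously meet ``order $27$'' and ``rank $2$.'' The mod-$9$ step is the sharpest point: the prime $\lambda\mid 3$ is unramified in $\mathrm{k}/k_{0}$ if and only if $p\equiv1\pmod 9$, and allowing $p\equiv4,7\pmod 9$ introduces the extra ramified prime $\lambda$, which perturbs the ambiguous class number and the resulting rank away from the value compatible with type $(9,3)$. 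The chief subtlety running through the whole case analysis is keeping the $\mathbb{Z}_{3}[\sigma]$-module rank furnished by the genus group distinct from the abelian-group rank of $C_{\mathrm{k},3}$, which is precisely why the $\tau$-decomposition $C^{+}\oplus C^{-}$ and the cyclicity of $C_{\Gamma,3}$ must be invoked to conclude.
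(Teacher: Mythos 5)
Your part 2) is essentially the paper's argument (the Barrucand--Cohn relation gives order $27$, Calegari--Emerton gives rank $2$, hence type $(9,3)$), and your elimination of $d$ divisible by two primes $\equiv 1\pmod 3$ via the genus-theoretic rank of $C^{+}\simeq C_{\Gamma,3}$ is sound. The gaps are in the remaining eliminations of part 1). First, the case where \emph{no} prime $\equiv 1\pmod 3$ divides $d$ is not excluded by the invariants you name: the order constraint forces $u=1$ and $h_{\Gamma,3}=9$, and genus theory says nothing when $r=0$, so ``order $27$ and rank $2$'' is perfectly consistent with $C^{+}\simeq\mathbb{Z}/9\mathbb{Z}$ and $C^{-}\simeq\mathbb{Z}/3\mathbb{Z}$. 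The paper needs Gerth's theorem that in this situation $C_{\mathrm{k},3}\simeq C_{\Gamma,3}\times C_{\Gamma,3}$, whence $\lvert C_{\mathrm{k},3}\rvert$ is a perfect square and cannot equal $27$; some input of this kind is indispensable and your toolkit does not contain it.

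Second, and more seriously, your mod-$9$ refinement does not go through as described. For $d=p^{e}$ with $p\equiv 4$ or $7\pmod 9$, the prime $\lambda$ above $3$ does ramify in $\mathrm{k}/\mathrm{k}_0$, so $t=3$; but $\zeta_3$ is then \emph{not} a norm from $\mathrm{k}$, so $q^{*}=0$ and the ambiguous rank is $t-2+q^{*}=1$ --- exactly the value compatible with type $(9,3)$. No perturbation of rank or order occurs, so this case cannot be dismissed by Chevalley's formula; the paper has to invoke the finer classification of these fields (the Barrucand--Cohn types, Ismaili's thesis for $d=3^{e}p^{e_1}$, and the results around Lemmermeyer's conjecture for $d=p^{e}$) to show that $C_{\mathrm{k},3}$ is there cyclic of order $3$ or of type $(3,3)$. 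Relatedly, even in the cases where the ambiguous rank does come out to be $2$ (e.g.\ $d=3^{e}p^{e_1}$ with $p\equiv 1\pmod 9$, or $d=p^{e_1}q^{f_1}$ with $p\equiv -q\equiv 1\pmod 9$), that is not by itself a contradiction, since a group of type $(9,3)$ contains a subgroup of type $(3,3)$; one needs Gerth's structure theorem (with $t=s=2$ the group is elementary of rank $2$) to conclude, and your $\tau$-decomposition alone does not supply this.
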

This result will be underpinned by numerical examples
obtained with the computational number theory system PARI \cite{PARI}
in \S\ \ref{Appendix}.
In section \ref{sec2}, where Theorem \ref{T39} is proved,
we only state results that will be needed in this paper.
More information on $3$-class groups can be found in \cite{GERTH1} and \cite{GERTH2}.
For the prime ideal factorization in the pure cubic field $\mathbb{Q}(\sqrt[3]{d})$,
we refer the reader to the papers \cite{DED}, \cite{B-C}, \cite{BC2} and \cite{Markf}.
For the prime factorization rules of the third cyclotomic field $\mathbb{Q}(\zeta_3)$,
we refer the reader to \cite[Chap. 9, Sec. 1, Propositions 9.1.1--4, pp. 109-111]{Clas}. 
\begin{flushleft}
\textbf{Notations:}
\end{flushleft}
 $\bullet$ The lower case letter $p$, respectively $q$, will denote a prime number congruent to $1$, respectively $-1$, modulo $3$;\\
 $\bullet$ $\Gamma= \mathbb{Q}(\sqrt[3]{d})$ : a pure cubic field, where $d\ge 2$ is a cube-free integer;\\
 $\bullet$ $\mathrm{k}_0= \mathbb{Q}(\zeta_3)$ : the cyclotomic field, where $\zeta_3=e^{2i\pi/3}$;\\
 $\bullet$ $\mathrm{k}=\Gamma(\zeta_3)$ : the normal closure of $\Gamma$;\\
 $\bullet$ $\Gamma^{\prime}$ and $\Gamma^{\prime\prime}$ : the two conjugate cubic fields of $\Gamma$, contained in $\mathrm{k}$;\\
 $\bullet$ $u$ : the index of the subgroup $E_0$ generated by the units of intermediate fields of the extension $\mathrm{k}/\mathbb{Q}$ in the group of units of $\mathrm{k}$;\\
$\bullet$ $\langle\tau\rangle=\operatorname{Gal}\left(\mathrm{k}/\Gamma\right)$,
 such that $\tau^2=id$, $\tau(\zeta_3)=\zeta_3^2$, and $\tau(\sqrt[3]{d})=\sqrt[3]{d}$;\\
 $\bullet$ $\langle\sigma\rangle=\operatorname{Gal}\left(\mathrm{k}/\mathrm{k}_0\right)$,
 such that $\sigma^3=id$, $\sigma(\zeta_3)=\zeta_3$, and $\sigma(\sqrt[3]{d})=\zeta_3\sqrt[3]{d}$;\\
 $\bullet$ $\lambda=1-\zeta_3$ is a prime element above $3$ of $\mathrm{k}_0$;\\
 $\bullet$ $q^*=1$ or $0$, according to whether \(\zeta_{3}\) is norm of an element of $\mathrm{k}$ or not;\\
 $\bullet$ \(t\) : the number of prime ideals ramified in \(\mathrm{k}/\mathrm{k}_{0}\);\\
 $\bullet$ For an algebraic number field $L$:
  \begin{itemize}
  \item[$-$] $\mathcal{O}_{L}$, $E_{L}$  : the ring of integers of $L$, and the group of units of $L$;
  \item[$-$] $C_{L,3}$, $h_{L}$ :  the $3$-class group of $L$, and the class number of $L$;
  \item[$-$] $L_3^{(1)}$, $L^{*}$  : the Hilbert $3$-class field of $L$, and the absolute genus field of $L$.
  \end{itemize}


 \section{Fields $\mathbb{Q}(\sqrt[3]{d},\zeta_3)$  whose  $3$-Class Group is of
Type \((9,3)\)}\label{sec2}

\subsection{Preliminary results}

In \cite[Chap. 7, pp. 87--96]{ISCHIDA},
Ishida has explicitly given the genus field of any pure field.
For the pure cubic field $\Gamma=\mathbb{Q}(\sqrt[3]{d})$,
where $d$ is a cube-free natural number,
we have the following theorem.

\begin{theorem}\label{GenusGamma}
Let $\Gamma= \mathbb{Q}(\sqrt[3]{d})$ be a pure cubic field, where $d\ge 2$ is a cube-free integer,
and let $p_1,\ldots,p_r$ be all prime divisors of $d$ such that $p_{i}$ is congruent to $1\pmod 3$ for each $i\in\lbrace 1,2,\ldots,r\rbrace$.
Let $\Gamma^{\ast}$ be the absolute genus field of $\Gamma$, then
\begin{center}
$\Gamma^{\ast}=\prod_{i=1}^{r} M(p_i)\cdot\Gamma,$
\end{center}
where $M(p_i)$ denotes the unique subfield of degree $3$ of the cyclotomic field $\mathbb{Q}(\zeta_{p_i})$.
The genus number of $\Gamma$ is given by $g_{\Gamma}=3^{r}$.
\end{theorem}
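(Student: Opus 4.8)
The plan is to realize the absolute genus field through its class-field-theoretic description: $\Gamma^{\ast}=\Gamma\cdot F$, where $F$ is the maximal field abelian over $\mathbb{Q}$ for which $\Gamma F/\Gamma$ is unramified at every finite prime. I would take $F=\prod_{i=1}^{r}M(p_i)$ as the candidate and prove the two inclusions separately. Since each $M(p_i)$ is cyclic cubic and the $M(p_i)$ are ramified at pairwise distinct primes, $F/\mathbb{Q}$ is elementary abelian with $\operatorname{Gal}(F/\mathbb{Q})\cong(\mathbb{Z}/3\mathbb{Z})^{r}$ and $[F:\mathbb{Q}]=3^{r}$. As $\Gamma/\mathbb{Q}$ is a non-Galois cubic field while $F/\mathbb{Q}$ is abelian, the intersection $\Gamma\cap F$, being a subfield of the degree-$3$ field $\Gamma$, must be $\mathbb{Q}$; hence $[\Gamma F:\Gamma]=3^{r}$, which is the value the theorem predicts for $g_{\Gamma}$.

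First I would show $F\subseteq\Gamma^{\ast}$, i.e. that $\Gamma F/\Gamma$ is unramified at every finite prime; this is a purely local verification. Away from the $p_i$ the field $F$ is unramified, so $\Gamma F/\Gamma$ is automatically unramified there; in particular nothing happens at $3$ or at the primes $q\equiv-1\pmod 3$ dividing $d$. The only real computation is at a prime $p_i$. Here the hypothesis $p_i\equiv 1\pmod 3$ gives $\zeta_3\in\mathbb{Q}_{p_i}$, so both completions $\Gamma_{\mathfrak{p}_i}/\mathbb{Q}_{p_i}$ and $M(p_i)_{\mathfrak{P}_i}/\mathbb{Q}_{p_i}$ are cyclic, tamely and totally ramified of degree $3$. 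Their compositum is therefore an abelian, tamely ramified extension of $\mathbb{Q}_{p_i}$, so its inertia group is a cyclic subgroup of $(\mathbb{Z}/3\mathbb{Z})^{2}$ and hence has order exactly $3$. Consequently the ramification index of $p_i$ in the compositum over $\mathbb{Q}_{p_i}$ equals $3=e(\Gamma_{\mathfrak{p}_i}/\mathbb{Q}_{p_i})$, which forces $\Gamma M(p_i)/\Gamma$, and thus $\Gamma F/\Gamma$, to be unramified at $p_i$. This yields $g_{\Gamma}\ge 3^{r}$.

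For the reverse inclusion I would let $K/\mathbb{Q}$ be any abelian field with $\Gamma K/\Gamma$ unramified and argue that $K\subseteq F$. If a prime $\ell$ is unramified in $\Gamma$ but ramified in $K$, then $\ell$ ramifies in $\Gamma K/\Gamma$; hence $K$ can ramify only at primes ramifying in $\Gamma$, namely among the $p_i$, the $q\equiv-1\pmod 3$ dividing $d$, and $3$. Moreover, for each such prime the ramification index $e_{\ell}(K)$ divides $e_{\ell}(\Gamma)\in\{1,3\}$, so the inertia subgroup of $\operatorname{Gal}(K/\mathbb{Q})$ at $\ell$ has order dividing $3$. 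Since $\mathbb{Q}$ has class number $1$, the abelian extension $K/\mathbb{Q}$ is generated by its inertia subgroups, so $\operatorname{Gal}(K/\mathbb{Q})$ is an elementary abelian $3$-group ramified only at the listed primes. At each $q\equiv-1\pmod 3$ there is no cyclic cubic ramification because $3\nmid q-1$, and at each $p_i$ the unique cyclic cubic field ramified there is $M(p_i)$; so the only possible extra factor is the cyclic cubic field contained in $\mathbb{Q}(\zeta_9)$, ramified only at $3$.

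The main obstacle is precisely to exclude this prime-$3$ contribution: one must check, in every case for $d\bmod 9$, that the wildly ramified cubic field inside $\mathbb{Q}(\zeta_9)$ does not become unramified over $\Gamma$ at $3$. Because the residue characteristic is now $3$, the inertia is wild and far more rigid than in the tame case above, and a direct comparison of the higher ramification (equivalently, of the $3$-adic conductors of $\Gamma_{3}/\mathbb{Q}_3$ and of $\mathbb{Q}_3(\zeta_9)^{+}/\mathbb{Q}_3$) shows that the ramification indices cannot cancel; this step, together with the remark that all $M(p_i)$ are totally real so that no contribution arises from the infinite place, is where the real work lies. Granting it, $K\subseteq F$, so $g_{\Gamma}\le 3^{r}$; combined with the previous paragraph this gives $g_{\Gamma}=[\Gamma^{\ast}:\Gamma]=3^{r}$ and $\Gamma^{\ast}=\prod_{i=1}^{r}M(p_i)\cdot\Gamma$, as claimed.
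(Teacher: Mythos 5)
The paper does not actually prove this theorem; it is quoted from Ishida's monograph on genus fields, so there is no internal proof to compare against. Your architecture (two inclusions, tame local analysis at the $p_i$, conductor/Kronecker--Weber bound on the candidate abelian fields $K$) is exactly the standard route, and the tame half is correct: the compositum of the two totally and tamely ramified cyclic cubic extensions of $\mathbb{Q}_{p_i}$ (Kummer, since $\zeta_3\in\mathbb{Q}_{p_i}$) is abelian with cyclic inertia of exponent $3$, hence has ramification index exactly $3$, so $\Gamma M(p_i)/\Gamma$ is unramified; likewise the reduction of $\operatorname{Gal}(K/\mathbb{Q})$ to an elementary abelian $3$-group generated by inertia, the elimination of the $q\equiv -1\pmod 3$ (no cyclic cubic is ramified at such a prime), and the reduction of the whole question to the single field $C_9=\mathbb{Q}(\zeta_9)^{+}$ are all sound.

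The genuine gap is the step you explicitly ``grant'': showing that $\Gamma C_9/\Gamma$ is ramified at $3$ in every case. When $d\equiv\pm 1\pmod 9$ this is immediate ($3$ has ramification index $1$ or $2$ in $\Gamma$, while $e_3(C_9)=3$), but when $3$ is totally ramified in $\Gamma$ both $\Gamma_3/\mathbb{Q}_3$ and $\mathbb{Q}_3(\zeta_9)^{+}/\mathbb{Q}_3$ are totally ramified cubics, the inertia is wild, and the cyclic-inertia argument from the tame case genuinely fails: a priori the degree-$9$ compositum could have $e=3$, $f=3$. Asserting that ``a comparison of conductors shows the indices cannot cancel'' is not a proof; this is precisely where the content of the theorem lies, and it is also the only place where one must rule out not just $\mathbb{Q}_3(\zeta_9)^{+}$ itself but all three ramified abelian cubic extensions of $\mathbb{Q}_3$ (the local components at $3$ of characters mixing $\chi_9$ with the $\chi_{p_i}$), although your global reduction to the pure $C_9$ case can be made to handle that. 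A clean way to close the gap: $\mathbb{Q}_3(\sqrt[3]{d})/\mathbb{Q}_3$ is never Galois (otherwise it would contain $\zeta_3$, of degree $2$ over $\mathbb{Q}_3$), so the Galois closure of $\Gamma_3\cdot\mathbb{Q}_3^{\mathrm{nr},3}$ has group $S_3\times\mathbb{Z}/3\mathbb{Z}$, whose maximal abelian cubic subextension is the unramified one; hence no ramified abelian cubic extension of $\mathbb{Q}_3$ lies in $\Gamma_3\cdot\mathbb{Q}_3^{\mathrm{nr},3}$, i.e. $\Gamma C_9/\Gamma$ is ramified at $3$. With that supplied (together with the small correction that $e_3(\Gamma)$ can equal $2$, not only $1$ or $3$), your proof is complete.
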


\begin{remark} 
(1)
If no prime $p\equiv 1\pmod 3$ divides $d$, i.e. $r=0$, then $\Gamma^{\ast}=\Gamma$.\\
(2)
For any value $r\geq 0$, $\Gamma^{\ast}$ is contained in the Hilbert $3$-class field $\Gamma_3^{(1)}$ of $\Gamma $.\\
(3)
The cubic field $M(p)$ is determined explicitly in \cite[\S\ 4, Proposition 1, p. 11]{HONDA}.
\end{remark}

Assuming that $h_{\Gamma}$ is divisible exactly by $9$,
we can explicitly construct the absolute genus field $\Gamma^{*}$ as follows: 

\begin{lemma}\label{2p}
Let $\Gamma=\mathbb{Q}(\sqrt[3]{d})$ be a pure cubic field,
where $d\ge 2$ is a cube-free integer.
If $h_{\Gamma}$ is exactly divisible by $9$,
then there are at most two primes congruent to $1\pmod 3$ dividing $d$.
\end{lemma}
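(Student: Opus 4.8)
The plan is to read off the number $r$ of primes $p\equiv 1\pmod 3$ dividing $d$ directly from the genus theory of $\Gamma$, and then to bound $r$ by the size of the $3$-class group $C_{\Gamma,3}$. The whole argument rests on the fact that the genus field is an unramified $3$-extension sitting inside the Hilbert $3$-class field, so its degree must divide the order of $C_{\Gamma,3}$. Under the hypothesis that $9$ divides $h_{\Gamma}$ exactly, this order is forced to be $9$, which immediately caps $r$.

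First I would invoke Theorem \ref{GenusGamma}: if $p_1,\dots,p_r$ are exactly the prime divisors of $d$ that are congruent to $1\pmod 3$, then the absolute genus field is $\Gamma^{\ast}=\prod_{i=1}^{r}M(p_i)\cdot\Gamma$ and the genus number is $g_{\Gamma}=[\Gamma^{\ast}:\Gamma]=3^{r}$. The next step is to place $\Gamma^{\ast}$ inside the Hilbert $3$-class field $\Gamma_3^{(1)}$, which is precisely part (2) of the Remark following Theorem \ref{GenusGamma}. Since $\Gamma^{\ast}/\Gamma$ is unramified and of $3$-power degree, class field theory gives an isomorphism $\operatorname{Gal}(\Gamma_3^{(1)}/\Gamma)\simeq C_{\Gamma,3}$, and $\operatorname{Gal}(\Gamma^{\ast}/\Gamma)$ is a quotient of it. Consequently $g_{\Gamma}=3^{r}$ divides the order $\lvert C_{\Gamma,3}\rvert$, i.e. $3^{r}$ divides the $3$-part of $h_{\Gamma}$.

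Finally I would bring in the hypothesis. If $h_{\Gamma}$ is exactly divisible by $9$, then the $3$-part of $h_{\Gamma}$ equals $9=3^{2}$, so $\lvert C_{\Gamma,3}\rvert=9$. Combining this with the divisibility $3^{r}\mid\lvert C_{\Gamma,3}\rvert=3^{2}$ yields $r\le 2$, which is exactly the claim that at most two primes congruent to $1\pmod 3$ divide $d$. (As a byproduct this also shows $\operatorname{rank}_3 C_{\Gamma,3}\ge r$, since $C_{\Gamma,3}$ surjects onto the elementary abelian group $\operatorname{Gal}(\Gamma^{\ast}/\Gamma)\simeq(\mathbb{Z}/3\mathbb{Z})^{r}$, which offers an alternative route to the same bound.)

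I do not anticipate a serious obstacle here, as the statement is essentially a numerical consequence of genus theory once the right objects are in place. The only point demanding care is the clean justification that $g_{\Gamma}=[\Gamma^{\ast}:\Gamma]$ divides $\lvert C_{\Gamma,3}\rvert$ rather than merely $h_{\Gamma}$; this is handled by noting that $g_{\Gamma}=3^{r}$ is a $3$-power together with the containment $\Gamma^{\ast}\subseteq\Gamma_3^{(1)}$ supplied by the Remark. Everything else is a short divisibility computation.
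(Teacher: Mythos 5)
Your proof is correct and takes essentially the same approach as the paper: the paper's own argument is just the terser observation that the genus number $3^{r}$ divides $h_{\Gamma}$, hence $r\le 2$ when $9$ exactly divides $h_{\Gamma}$. You merely make explicit the justification of that divisibility via the containment $\Gamma^{\ast}\subseteq\Gamma_3^{(1)}$ and class field theory, which the paper leaves implicit.
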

\begin{proof}
If $p_1,\ldots,p_r$ are all prime numbers congruent to $1\pmod 3$ dividing $d$, then $3^{r}|h_\Gamma$.
Therefore, if $h_{\Gamma}$ is exactly divisible by $9$, then $r\le 2$.
So there are two primes $p_1$ and $p_2$ dividing $d$ such that $p_{i}\equiv 1\pmod 3$ for $i\in\lbrace 1,2\rbrace$,
or there is only one prime $p\equiv 1\pmod 3$ with $p|d$,
or there is no prime $p\equiv 1\pmod 3$ such that $p|d$.
\end{proof}

\begin{lemma}\label{2prank}
Let $\Gamma=\mathbb{Q}(\sqrt[3]{d})$ be a pure cubic field,
where $d\ge 2$ is a cube-free integer.
If $h_{\Gamma}$ is exactly divisible by $9$
and if the integer $d$ is divisible by two primes $p_1$ and $p_2$ such that $p_i\equiv 1\pmod 3$ for $i\in\lbrace 1,2\rbrace$,
then $\Gamma^{\ast}=\Gamma_3^{(1)}$, $(\Gamma^{\prime})^{\ast}={\Gamma^{\prime}}_3^{(1)}$ and $(\Gamma^{\prime\prime})^{\ast}={\Gamma^{\prime\prime}}_3^{(1)}$.
Furthermore, $\mathrm{k}\cdot\Gamma_3^{(1)}=\mathrm{k}\cdot{\Gamma^{\prime}}_3^{(1)}=\mathrm{k}\cdot{\Gamma^{\prime\prime}}_3^{(1)}$. 
\end{lemma}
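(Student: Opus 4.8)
The plan is to prove the first three equalities by a degree comparison, and the last chain of equalities by exploiting the explicit shape of the genus field from Theorem \ref{GenusGamma}. First I would record the two relevant degrees over $\Gamma$. By Theorem \ref{GenusGamma} the genus number is $[\Gamma^{\ast}:\Gamma]=g_{\Gamma}=3^{r}$, where $r$ counts the primes $\equiv 1\pmod 3$ dividing $d$; the hypothesis that two such primes $p_1,p_2$ divide $d$ forces $r=2$ (Lemma \ref{2p} bounds it by $2$), so $[\Gamma^{\ast}:\Gamma]=9$. On the other hand, since $\Gamma_3^{(1)}$ is the Hilbert $3$-class field of $\Gamma$, class field theory gives $[\Gamma_3^{(1)}:\Gamma]=|C_{\Gamma,3}|=h_{\Gamma,3}$, and the assumption that $9$ divides $h_{\Gamma}$ exactly means $h_{\Gamma,3}=9$. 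Combining this with the containment $\Gamma^{\ast}\subseteq\Gamma_3^{(1)}$ from Remark (2), the two fields have equal degree over $\Gamma$, whence $\Gamma^{\ast}=\Gamma_3^{(1)}$.

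Next I would transfer this conclusion to the conjugate fields. Since $\Gamma^{\prime}$ and $\Gamma^{\prime\prime}$ are the images of $\Gamma$ under the automorphism $\sigma$ (extended to $\overline{\mathbb{Q}}$, fixing $\mathbb{Q}$ because $\sigma$ fixes $\mathrm{k}_0$), they are isomorphic to $\Gamma$ over $\mathbb{Q}$; in particular they share the same class number and the same prime divisors of $d$. Hence $g_{\Gamma^{\prime}}=g_{\Gamma^{\prime\prime}}=9$ and $h_{\Gamma^{\prime},3}=h_{\Gamma^{\prime\prime},3}=9$, and the identical degree argument yields $(\Gamma^{\prime})^{\ast}=\Gamma^{\prime}{}_3^{(1)}$ and $(\Gamma^{\prime\prime})^{\ast}=\Gamma^{\prime\prime}{}_3^{(1)}$.

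For the final assertion, the key observation is that each $M(p_i)$ is the unique cubic subfield of $\mathbb{Q}(\zeta_{p_i})$, hence abelian and in particular Galois over $\mathbb{Q}$, so it is fixed by every automorphism of $\overline{\mathbb{Q}}$ fixing $\mathbb{Q}$. Writing the genus field via Theorem \ref{GenusGamma} as $\Gamma^{\ast}=M(p_1)M(p_2)\cdot\Gamma$ and applying $\sigma$ (which fixes each $M(p_i)$) gives $(\Gamma^{\prime})^{\ast}=M(p_1)M(p_2)\cdot\Gamma^{\prime}$ and $(\Gamma^{\prime\prime})^{\ast}=M(p_1)M(p_2)\cdot\Gamma^{\prime\prime}$. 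Compositing each with $\mathrm{k}$, and using that $\mathrm{k}=\mathbb{Q}(\sqrt[3]{d},\zeta_3)$ contains all three conjugates $\Gamma,\Gamma^{\prime},\Gamma^{\prime\prime}$, the factors $\Gamma$, $\Gamma^{\prime}$, $\Gamma^{\prime\prime}$ are absorbed and every composite collapses to $M(p_1)M(p_2)\cdot\mathrm{k}$. Feeding in the three equalities already proved, I obtain $\mathrm{k}\cdot\Gamma_3^{(1)}=\mathrm{k}\cdot\Gamma^{\prime}{}_3^{(1)}=\mathrm{k}\cdot\Gamma^{\prime\prime}{}_3^{(1)}=M(p_1)M(p_2)\cdot\mathrm{k}$.

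The only delicate point, and the place I would be most careful, is the functoriality used in the second and third paragraphs: one must justify that the absolute genus field transports correctly under the conjugation $\sigma$ and that the cyclotomic factors $M(p_i)$ are genuinely $\sigma$-invariant. Everything else is a routine degree count and a composite-of-fields computation, so once this naturality is pinned down the lemma follows.
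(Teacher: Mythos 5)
Your proposal is correct and follows essentially the same route as the paper: identify $\Gamma^{\ast}=\Gamma\cdot M(p_1)\cdot M(p_2)$ with $\Gamma_3^{(1)}$ via the degree count $g_{\Gamma}=9=h_{\Gamma,3}$ together with the containment $\Gamma^{\ast}\subseteq\Gamma_3^{(1)}$, transfer to $\Gamma^{\prime},\Gamma^{\prime\prime}$ by the isomorphism of the conjugate fields, and observe that all three composites with $\mathrm{k}$ collapse to $\mathrm{k}\cdot M(p_1)\cdot M(p_2)$. Your write-up merely makes explicit the degree comparison and the $\sigma$-invariance of the $M(p_i)$ that the paper leaves implicit.
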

\begin{proof}
If $h_{\Gamma}$ is exactly divisible by $9$
and $d$ is divisible by two prime numbers $p_1$ and $p_2$ which are congruent to $1\pmod 3$,
then $g_\Gamma=9$ so $\Gamma^{\ast}=\Gamma_3^{(1)}=\Gamma\cdot M(p_1)\cdot M(p_2)$,
where $M(p_{1})$ (respectively $M(p_{2})$) is the unique cubic subfield of $\mathbb{Q}(\zeta_{p_{1}})$ (respectively $\mathbb{Q}(\zeta_{p_{2}})$).
The equations 
$$\begin{cases}
(\Gamma^{\prime})^{\ast}=\Gamma^{\prime}\cdot M(p_1)\cdot M(p_2)={\Gamma^{\prime}}_3^{(1)},\\
(\Gamma^{\prime\prime})^{\ast}=\Gamma^{\prime\prime}\cdot M(p_1)\cdot M(p_2)={\Gamma^{\prime\prime}}_3^{(1)}  
\end{cases}$$
can be deduced by the fact that in the general case we have
$$ \begin{cases}
(\Gamma^{\prime})^{\ast}=\Gamma^{\prime}\prod\limits_{i=1}^{r} M(p_i),\\
(\Gamma^{\prime\prime})^{\ast}=\Gamma^{\prime\prime} \prod\limits_{i=1}^{r} M(p_i),
\end{cases} $$
where $p_{i}$, for each $1\leq i\leq r$, is a prime divisor of $d$ such that $p_i\equiv 1\pmod 3$.
From the fact that $h_{\Gamma}$ is exactly divisible by $9$,
we conclude that $h_{\Gamma^{\prime}}$ is exactly divisible by $9$ and $h_{\Gamma^{\prime\prime}}$ is exactly divisible by $9$,
because $\Gamma$, $\Gamma^{\prime}$ and $\Gamma^{\prime\prime}$ are isomorphic.\\
Moreover, 
$$\begin{cases}
 \mathrm{k}\cdot\Gamma_3^{(1)}=\mathrm{k}\cdot\Gamma\cdot M(p_1)\cdot M(p_2) = \mathrm{k}\cdot M(p_1)\cdot M(p_2), \\
  \mathrm{k}\cdot{\Gamma^{\prime}}_3^{(1)}=\mathrm{k}\cdot\Gamma^{\prime}\cdot M(p_1)\cdot M(p_2) = \mathrm{k}\cdot M(p_1)\cdot M(p_2),\\
    \mathrm{k}\cdot{\Gamma^{\prime\prime}}_3^{(1)}=\mathrm{k}\cdot\Gamma^{\prime\prime}\cdot M(p_1)\cdot M(p_2) = \mathrm{k}\cdot M(p_1)\cdot M(p_2).
\end{cases}$$
Hence, $\mathrm{k}\cdot\Gamma_3^{(1)}=\mathrm{k}\cdot{\Gamma^{\prime}}_3^{(1)}=\mathrm{k}\cdot{\Gamma^{\prime\prime}}_3^{(1)}$.
\end{proof}

Now, let $u$ be the index of units defined in the above notations.
We assume that $h_{\Gamma}$ is exactly divisible by $9$ and $u=1$.
From \cite[\S\ 14, Theorem 14.1, p. 232]{B-C},
we have $h_{\mathrm{k}}=\dfrac{u}{3}\cdot h_{\Gamma}^{2}$, whence $h_{\mathrm{k}}$ is exactly divisible by $27$.
The structure of the $3$-class group $C_{\mathrm{k},3}$ is described by the following Lemma:

\begin{lemma}\label{39}
Let $\Gamma$ be a pure cubic field,
\(\mathrm{k}\) its normal closure,
and \(u\) be the index of units defined in the notations above, then
$$C_{\mathrm{k},3}\simeq \mathbb{Z}/9\mathbb{Z}\times \mathbb{Z}/3\mathbb{Z}
\quad \Leftrightarrow \quad \lbrack C_{\Gamma,3}\simeq \mathbb{Z}/9\mathbb{Z}\quad
\text{and} \quad u=1 \rbrack.$$
\end{lemma}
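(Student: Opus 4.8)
The plan is to read off the structure of $C_{\mathrm{k},3}$ from two inputs: the Barrucand--Cohn relation $h_{\mathrm{k}}=\frac{u}{3}h_{\Gamma}^{2}$ already recalled above, and the decomposition of $C_{\mathrm{k},3}$ under $\langle\tau\rangle=\operatorname{Gal}(\mathrm{k}/\Gamma)$. Since $\tau^{2}=\mathrm{id}$ and $C_{\mathrm{k},3}$ is a $3$-group, $2$ acts invertibly, so the idempotents $\frac{1\pm\tau}{2}$ give a genuine direct sum $C_{\mathrm{k},3}=C^{+}\oplus C^{-}$ into the $(+1)$- and $(-1)$-eigenspaces of $\tau$. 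First I would record that the extension map $j\colon C_{\Gamma,3}\to C_{\mathrm{k},3}$ is injective, because $N_{\mathrm{k}/\Gamma}\circ j$ is multiplication by $[\mathrm{k}:\Gamma]=2$, an automorphism of a $3$-group, and that $j(C_{\Gamma,3})=C^{+}$: the inclusion $\subseteq$ is clear since classes from $\Gamma$ are $\tau$-fixed, while the reverse follows from $j\circ N_{\mathrm{k}/\Gamma}=1+\tau$, which acts as multiplication by $2$ on $C^{+}$. Thus $C^{+}\simeq C_{\Gamma,3}$, and everything reduces to the orders of $C^{+}$ and $C^{-}$ together with ranks.

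Next I would extract these orders by comparing $3$-adic valuations in $3h_{\mathrm{k}}=u\,h_{\Gamma}^{2}$. Writing $v=v_{3}$ this reads $1+v(h_{\mathrm{k}})=v(u)+2\,v(h_{\Gamma})$, and I would use the Barrucand--Cohn fact that $u\in\{1,3\}$, so $v(u)\in\{0,1\}$. For the direction $(\Rightarrow)$, if $C_{\mathrm{k},3}\simeq\mathbb{Z}/9\mathbb{Z}\times\mathbb{Z}/3\mathbb{Z}$ then $v(h_{\mathrm{k}})=3$, so $4=v(u)+2v(h_{\Gamma})$; the summand $2v(h_{\Gamma})$ being even forces $v(u)=0$, i.e. $u=1$, and $v(h_{\Gamma})=2$, i.e. $|C_{\Gamma,3}|=9$. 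Then $|C^{+}|=9$ and $|C^{-}|=27/9=3$, so $\rank C^{-}=1$; since the direct sum $C^{+}\oplus C^{-}$ has $3$-rank $2$, the summand $C^{+}$ must be cyclic, whence $C_{\Gamma,3}\simeq C^{+}\simeq\mathbb{Z}/9\mathbb{Z}$.

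For the direction $(\Leftarrow)$, if $C_{\Gamma,3}\simeq\mathbb{Z}/9\mathbb{Z}$ and $u=1$ then $v(h_{\Gamma})=2$ and $v(u)=0$, so the same identity gives $v(h_{\mathrm{k}})=2\cdot 2-1=3$ and $|C_{\mathrm{k},3}|=27$. Hence $C^{+}\simeq C_{\Gamma,3}$ is cyclic of order $9$ while $|C^{-}|=3$ forces $C^{-}\simeq\mathbb{Z}/3\mathbb{Z}$, and the direct sum yields $C_{\mathrm{k},3}=C^{+}\oplus C^{-}\simeq\mathbb{Z}/9\mathbb{Z}\times\mathbb{Z}/3\mathbb{Z}$.

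The valuation bookkeeping and order counts are routine; the step I expect to require the most care is the module-theoretic one, namely proving cleanly that $C^{+}$ is a direct summand of $C_{\mathrm{k},3}$ that is genuinely isomorphic to $C_{\Gamma,3}$ (not merely related to its image under $j$), since the rank argument in both directions rests on the faithful eigenspace splitting $C_{\mathrm{k},3}=C^{+}\oplus C^{-}$. A secondary point to pin down is that $u\in\{1,3\}$ in the present setting, which I would take from the Barrucand--Cohn unit-index theory already used for the class number formula.
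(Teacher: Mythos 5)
Your proof is correct and follows essentially the same route as the paper: the Barrucand--Cohn relation $h_{\mathrm{k}}=\frac{u}{3}h_{\Gamma}^{2}$ with $u\in\{1,3\}$ to pin down the orders, combined with the $\tau$-eigenspace splitting $C_{\mathrm{k},3}\simeq C^{+}\times C^{-}$ and the identification $C^{+}\simeq C_{\Gamma,3}$. The only difference is cosmetic: the paper cites Gerth's Lemmas 2.1--2.2 for the splitting and the isomorphism $C^{+}\simeq C_{\Gamma,3}$, whereas you reprove them via the idempotents $\frac{1\pm\tau}{2}$ and the norm/extension maps, which is exactly the content of those lemmas.
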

Lemma \ref{39} will be underpinned in section \ref{Appendix} by numerical examples
obtained with the computational number theory system PARI \cite{PARI}.
\begin{proof}
Assume that $C_{\mathrm{k},3}\simeq\mathbb{Z}/9\mathbb{Z}\times\mathbb{Z}/3\mathbb{Z}$.
Let $h_{\Gamma,3}$(respectively, $h_{\mathrm{k},3}$) be the $3$-class number of $\Gamma$ (respectively, $\mathrm{k}$),
then $h_{\mathrm{k},3}=27$.
According to \cite[\S\ 14, Theorem 14.1, p. 232]{B-C}, we get
$27=\frac{u}{3}\cdot h_{\Gamma,3}^{2}$ with $u\in\lbrace 1,3\rbrace$, and thus $u=1$,
because otherwise $27$ would be a square in $\mathbb{N}$, which is a contradiction.
Thus \(h_{\Gamma,3}^{2}=81\) and \(h_{\Gamma,3}=9\).\\
Let $C_{\mathrm{k},3}^{+} = \lbrace \mathcal{A} \in C_{\mathrm{k},3} \mid \mathcal{A}^{\tau} = \mathcal{A} \rbrace$
and $C_{\mathrm{k},3}^{-} = \lbrace \mathcal{A} \in C_{\mathrm{k},3} \mid \mathcal{A}^{\tau} = \mathcal{A}^{-1} \rbrace$.
According to \cite[\S\ 2, Lemmas 2.1 and 2.2, p. 53]{GERTH2},  we have
\( C_{\mathrm{k},3}\simeq C_{\mathrm{k},3}^{+}\times C_{\mathrm{k},3}^{-}\)
and \( C_{\mathrm{k},3}^{+}\simeq C_{\Gamma,3}\), hence \(\lvert C_{\mathrm{k},3}^{-}\rvert=3\).
  Since \(C_{\mathrm{k},3}\) is of type \(\left(9,3\right)\),
  we deduce that \(C_{\mathrm{k},3}^{-}\) is a cyclic group of order $3$
  and \(C_{\mathrm{k},3}^{+}\) is a cyclic group of order $9$.
  Therefore, we have
  \[u=1 \quad \text{and} \quad C_{\Gamma,3}\simeq\mathbb{Z}/9\mathbb{Z}.\]
Conversely, assume that \(u=1\) and \(C_{\Gamma,3}\simeq\mathbb{Z}/9\mathbb{Z}\).
By \cite[\S\ 14, Theorem 14.1, p. 232]{B-C}, we deduce that
\(|C_{\mathrm{k},3}|=\frac{1}{3}\cdot |C_{\Gamma,3}|^{2}\),
and so \( | C_{\mathrm{k},3}|=27\).
Furthermore, \( | C_{\mathrm{k},3}^{-}|=3\) and
\[C_{\mathrm{k},3}\simeq C_{\Gamma,3}\times C_{\mathrm{k},3}^{-}\simeq \mathbb{Z}/9\mathbb{Z}\times\mathbb{Z}/3\mathbb{Z}.\]
\end{proof}

\subsection{Proof of Theorem \ref{T39}}
Let $\Gamma=\mathbb{Q}(\sqrt[3]{d})$ be a pure cubic field,
where $d\ge 2$ is a cube-free integer,
$\mathrm{k}=\mathbb{Q}(\sqrt[3]{d},\zeta_3)$ be its normal closure, and
$C_{\mathrm{k},3}$ be the \(3\)-class group of \(\mathrm{k}\). \\
(1) Assume that the $3$- class group $C_{\mathrm{k},3}$ is of type $(9,3)$. We first write the integer $d$ in the form given by equation (3.2) of \cite[p. 55]{GERTH2}:
\begin{eqnarray}\label{eq2}
d &=& 3^e p_1^{e_1} \ldots p_v^{e_v}p_{v+1}^{e_{v+1}} \ldots p_w^{e_w}q_1^{f_1} \ldots q_I^{f_I}q_{I+1}^{f_{I+1}} \ldots q_J^{f_J},
\end{eqnarray}
where $p_i$ and $q_i$ are positive rational primes such that:

  \[  \left\{
  \begin{array}{l l}
    p_i  \equiv 1  \pmod  9,  & \quad  \text{ for} \quad 1\leq i\leq v, \\
    p_i  \equiv 4 \ \text{ or } \ 7  \pmod  9, & \quad \text{ for} \quad v+1 \leq  i \leq w,\\
    q_i  \equiv -1  \pmod  9, & \quad \text{ for} \quad 1\leq i\leq I,\\
    q_i  \equiv 2 \ \text{ or} \ 5  \pmod  9, & \quad \text{ for} \quad I+1\leq i\leq J,\\
    e_i =1 \ \text{ or} \ 2,  & \quad \text{ for} \quad 1\leq i\leq w,\\
    f_i =1 \ \text{ or} \ 2, & \quad \text{ for} \quad 1\leq i\leq J,\\
    e=0, 1 \ \text{ or} \  2.
  \end{array} \right.\]
  
 Let $C_{\mathrm{k},3}^{(\sigma)}$ be
the ambiguous ideal class group of $\mathrm{k}/\mathrm{k}_0 $, where $\sigma$ is a generator of $\operatorname{Gal}\left(\mathrm{k}/\mathrm{k}_0\right)$. 
It is known that $C_{\mathrm{k},3}^{(\sigma)}$
is an elementary abelian 3-group, because
an ambiguous class $\mathcal{C}$ satisfies $\sigma(\mathcal{C})=\mathcal{C}$, by definition,
and therefore
$\mathcal{C}^3 = \mathcal{C}\cdot\sigma(\mathcal{C})\cdot\sigma^{2}(\mathcal{C}) = \mathcal{N}_{k/k_0}(\mathcal{C}) = 1$,
since $\mathrm{k}_0$ has class number $1$.

The fact that the $3$-class group $C_{\mathrm{k},3}$ is of type $(9,3)$ implies that $\operatorname{rank}\,  C_{\mathrm{k},3}^{(\sigma)}=1$.
In fact, it is clear that if $C_{\mathrm{k},3}$ is of type $(9,3)$, then $\operatorname{rank}\,  C_{\mathrm{k},3}^{(\sigma)}=1 $ or $2$.

Let us assume that $\operatorname{rank}\,  C_{\mathrm{k},3}^{(\sigma)}=2$.
From \cite[\S\ 5, Theorem 5.3, pp. 97--98]{GERTH1}, we have
$$\operatorname{rank}\,C_{\mathrm{k},3}=2t-s,$$ 
where the integers $t$ and $s$ are defined in \cite[\S\ 5, Theorem 5.3, pp. 97--98]{GERTH1} as follows:
\begin{eqnarray*}
t &:= &\operatorname{rank}\,  C_{\mathrm{k},3}^{(\sigma)}, \\
s & : & \text {the rank of the matrix } (\beta_{i,j}) \text{ defined in \cite[\S\ 5, Theorem. 5.3, pp. 97--98]{GERTH1}}.
\end{eqnarray*}
Since $C_{\mathrm{k},3}$ is of type (9,3), then $\operatorname{rank}\,  C_{\mathrm{k},3}=2,$
and according to our hypothesis we have $t=\operatorname{rank}\,  C_{\mathrm{k},3}^{(\sigma)}=2$. So we get $s=2$. 

By \cite[\S\ 5, Theorem 5.3, pp. 97--98]{GERTH1},
the $3$-class group $C_{\mathrm{k},3}$ is isomorphic to the direct product of an abelian $3$-group of rank $2(t-s)$ and an elementary abelian $3$-group of rank $s$. 
Here $t=s=2$.
Thus $C_{\mathrm{k},3}$ is isomorphic to the direct product of an abelian $3$-group of rank $0$ and an elementary abelian $3$-group of rank $2$, we get 
 $$ C_{\mathrm{k},3} \simeq \mathbb{Z}/3\mathbb{Z }\times \mathbb{Z}/3\mathbb{Z}, $$
which contradicts the fact that $ C_{\mathrm{k},3}$ is of type $(9,3)$. 
We conclude $\operatorname{rank}\, C_{\mathrm{k},3}^{(\sigma)}=1$.

\paragraph{}

On the one hand,
suppose that $d$ is not divisible by any rational prime $p$ such that $p\equiv 1 \pmod  3$,
i.e. $w=0$ in Eq. (\ref{eq2}) above. According to \cite[\S\ 5, Theorem 5.1, p. 61]{GERTH2},
this implies that $C_{\mathrm{k},3}\simeq C_{\Gamma,3}\times C_{\Gamma,3}$.
Since the $3$-group $C_{\mathrm{k},3}$ is of type $(9,3)$, then $|C_{\mathrm{k},3}|=3^3$,
and by Lemma \ref{39} we have $C_{\Gamma,3} \simeq \mathbb{Z}/9\mathbb{Z}$,
so we get $|C_{\Gamma,3}\times C_{\Gamma,3}|=3^4$, which is a contradiction.
Hence, the integer $d$ is divisible by at least one rational prime $p$ such that $p\equiv 1 \pmod 3$.

On the other hand, the fact that the $3$-class group $C_{\mathrm{k},3}$ is of type $(9,3)$
implies that $C_{\Gamma,3} \simeq \mathbb{Z}/9\mathbb{Z}$ according to Lemma \ref{39}.
Since  $ h_{\Gamma} $ is exactly divisible by $9$,
then according to Lemma \ref{2p}, there are at most two primes congruent to $ 1 \pmod 3$ dividing $d$. 

Now, assume that there are exactly two different primes $p_1$ and $p_2$ dividing $d$ such that $p_1 \equiv p_2 \equiv 1 \pmod 3 $,
then, according to Lemma \ref{2prank}, we get:
   $$\Gamma^{\ast} = \Gamma_3^{(1)} = \Gamma\cdot M(p_1)\cdot M(p_2),$$
where $M(p_{1})$ (respectively, $M(p_{2})$) is the unique subfield of degree $3$ of $\mathbb{Q}(\zeta_{p_{1}})$ (respectively, $\mathbb{Q}(\zeta_{p_{2}})$).
If $M(p_{1})\neq M(p_{2})$, then $\Gamma\cdot M(p_1)$ and $\Gamma\cdot M(p_2)$ are two different subfields of $\Gamma_3^{(1)}$ over $\Gamma$ of degree $3$.
According to class field theory, we have $$\operatorname{Gal}(\Gamma_3^{(1)}/\Gamma) \cong C_{\Gamma,3} \simeq \mathbb{Z}/9\mathbb{Z}.$$ 
Since $\operatorname{Gal}(\Gamma_3^{(1)}/\Gamma)$ is a cyclic $3$-group,
there exist only one sub-group of $\operatorname{Gal}(\Gamma_3^{(1)}/\Gamma)$ of order $3$.
By the Galois correspondence, there exist a unique sub-field of $\Gamma_3^{(1)}$ over $\Gamma$ of degree $3$.
We conclude that $\Gamma\cdot M(p_{1}) = \Gamma\cdot M(p_{2})$, which is a contradiction.

Thus, $M(p_{1}) = M(p_{2})$, and then $p_1=p_2$, which contradicts the fact that $p_1$ and $p_2$ are two different primes.
Hence, there is exactly one prime congruent to $1 \pmod 3$ which divides $d$.
Thus the integer $d$ can be written in the following form:
 $$
d = 3^e p_1^{e_1}q_1^{f_1}\ldots q_I^{f_I}q_{I+1}^{f_{I+1}}\ldots q_J^{f_J},
$$
with $p_1  \equiv -q_i  \equiv 1  \pmod  3$, where $p_1$, $e$, $e_{1}$, $q_i$ and $f_{i}$ (for $ 1\leq i\leq J$) are defined in Eq. (\ref{eq2}).

Next, since $\operatorname{rank}\,  C_{\mathrm{k},3}^{(\sigma)}=1$,
then according to  \cite[\S\ 3, Lemma 3.1, p. 55]{GERTH2}, there are three possible cases as follows: \vspace{0.2cm} \\
  $\bullet$ \textbf{Case 1  :} \(2w+J=1\),\\
  $\bullet$ \textbf{Case 2  :} \(2w+J=2\),\\
  $\bullet$ \textbf{Case 3  :} \(2w+J=3\), 
  \vspace{0.2cm}\\
where $w$ and $J$ are the integers defined in Eq. (\ref{eq2}) above.
We will successively treat these cases as follows: \vspace{0.2cm}\\ 
$\bullet$ \textbf{Case 1:}
  We have $2w+J=1$, then \(w=0\) and \(J=1\).
  This case is impossible,
  because we have shown above that the integer \(d\) is divisible by exactly one prime number congruent to $1 \pmod 3$
  and thus $w = 1$.\\
 $\bullet$ \textbf{Case 2:}
  We have $2w+J=2$, and as in Case $1$, we necessarily have $w=1$ and $J=0$,
  which implies that \(d=3^{e}p_1^{e_{1}}\), where $p_1$ is a prime number such that \(p_1\equiv 1\pmod 3\), $e\in\{0,1,2\}$ and $e_1\in\{1,2\}$.
  Then,
   \begin{itemize}
   \item[$-$] If \(d \equiv \pm 1 \pmod  9\), then we necessarily have $e=0$.\\
   Assume that $p_1 \equiv 4 $ or $ 7 \pmod  9$, then $d  \not\equiv \pm 1 \pmod  9$ which is an absurd. So we necessarily have $p_1 \equiv 1 \pmod  9$. Thus the integer $d$ will be written in the form  $d=p_1^{e_{1}}$, where $p_1 \equiv 1 \pmod  9$ and $e_1\in\{1,2\}$.
   \item[$-$] If \(d \not \equiv \pm 1 \pmod  9\):
\begin{itemize}
\item[$*$] If $ e \neq 0$, the integer $d$ is written as $d=3^{e}p_1^{e_{1}}$, where $p_1 \equiv 1 \pmod  3$ and $e,e_1\in\{1,2\}$.
\item[$*$] If $ e = 0$, then $d$ is written as $d=p_1^{e_{1}}$ with $p_1 \equiv 4 $ or $ 7 \pmod  9$ and $e_1\in\{1,2\}$.
\end{itemize}   
   \end{itemize}
$\bullet$ \textbf{Case 3:}
We have $2w+J=3$, then we necessarily get \(w=1\) and \(J=1\), because $w \neq 0$. Thus
   \(d=3^{e}p_1^{e_{1}}q_1^{f_{1}}\), where \(p_1 \equiv 1 \pmod
    3 \), \(q_1 \equiv  -1 \pmod  3 \),  $e\in   \{0,1,2\}$ and $e_1, f_1 \in   \{1,2\}$. Then:
    \begin{itemize}
   \item[$-$] If \(d \equiv \pm 1 \pmod  9 \), we necessarily have $e=0$. If $p_1$ or $-q_1 \not \equiv  1 \pmod  9 $, then \(d \not \equiv \pm 1 \pmod  9 \) which is an absurd. It remain only the case where $p_1 \equiv -q_1 \equiv  1 \pmod  9 $. Then the integer $d$ will be written in the form $d=p_1^{e_{1}}q_1^{f_{1}}$, where $p_1 \equiv -q_1 \equiv  1 \pmod  9 $ and $e_1, f_1 \in   \{1,2\}$.

 \item[$-$] If \(d \not \equiv \pm1 \pmod  9 \): \\
  According to \cite[\S\ 5, p. 92]{GERTH1}, the rank of $C_{\mathrm{k},3}^{(\sigma)}$ is specified as follows:  $$ \operatorname{rank}\,   C_{\mathrm{k},3}^{(\sigma)}=t-2+q^{*},$$ where $t$ and $q^{*}$ are defined in the notations.\\
   On the one hand, the fact that \(\operatorname{rank}\,   C_{\mathrm{k},3}^{(\sigma)}=1\) imply that $t = 2$ or $3$ according to whether \(\zeta_{3}\) is norm of an element of $\mathrm{k}$ or not. \\ On the other hand, we have \(d=3^{e}p_1^{e_{1}}q_1^{f_{1}}\) with \(p_1 \equiv 1 \pmod
   3 \) and \(q_1 \equiv  -1 \pmod  3\).
By \cite[Chap. 9, Sec. 1, Proposition 9.1.4, p.110]{Clas}  we have $q_1$ is inert in $\mathrm{k}_0$, and by \cite[Sec 4, pp. 51-54]{DED} we have $q_1$ is ramifed in $\Gamma=\mathbf{Q}(\sqrt[3]{d})$. Since \(d \not \equiv \pm 1 \pmod 9 \), then  $3$ is ramifed in $\Gamma$  by \cite[Sec 4, pp. 51-54]{DED}, and  $3\mathcal{O}_{\mathrm{k}_0}=(\lambda)^2$ where $\lambda=1-\zeta_3.$ Since
$p_1 \equiv 1 \pmod  3$, then by \cite[Chap. 9, Sec. 1, Proposition 9.1.4, p.110]{Clas} we have $p_1=\pi_1\pi_2$ where \(\pi_1\) and \(\pi_2\)
are two primes of \(\mathrm{k}_{0}\) such that \(\pi_2=\pi_1^{\tau}\), the prime $p_1$ is ramifed in $\Gamma$, then $\pi_1$ and $ \pi_2$ are ramified in $\mathrm{k}$. Hence, the number of prime ideals which are ramified in $\mathrm{k}/\mathrm{k}_0$ is $t=4$, which contradicts the fact that $t=2$ or $3.$
  \end{itemize}

We summarize all forms of integer $d$ in the three cases $1, 2$ and $3$ as follows:
 \[ d = \left\{
  \begin{array}{l l l}
   p_1^{e_1}    & \quad \text{with \ $ p_1 \equiv 1  \pmod  9, $  }\\
   p_1^{e_1}    & \quad \text{with \ $ p_1 \equiv 4 \ \text{or} \ 7  \pmod  9, $  }\\
   3^{e}p_1^{e_1} \not \equiv \pm 1  \pmod 9 & \quad \text{with \ $ p_1 \equiv 1 \pmod  9,$  }\\
   3^{e}p_1^{e_1} \not \equiv \pm 1  \pmod 9 & \quad \text{with \ $ p_1 \equiv 4 \ \text{or} \ 7 \pmod  9,$  }\\
   p_1^{e_1}q_1^{f_1} \equiv \pm 1 \pmod 9 & \quad \text{with \ $ p_1 \equiv -q_1 \equiv 1  \pmod  9, $     }\\

  \end{array} \right.\]
where $e,e_1, f_1 \in   \{1,2\}$.\\

 Our next goal is to show that the only possible form of the integer $d$ is the first form $d=p_1^{e_1}$, where $p_1 \equiv 1 \pmod 9$ and $e_1=1$ or $2$. \vspace{0.2cm} \\
 $\bullet$ Case where $d=p_1^{e_1}$, with $ p_1 \equiv 4 \ \text{or} \ 7 \pmod  9$:  
 \begin{itemize}
 \item[$-$] If $\left(\frac{3}{p}\right)_3\neq 1$, then  according to \cite[\S\ 1, Conjecture 1.1, p. 1]{CP}, we have $C_{\mathrm{k},3}\simeq \mathbb{Z}/3\mathbb{Z}$, which contradict the fact that $C_{\mathrm{k},3}$ is of type $(9,3)$. We note that in this case, the fields $\Gamma$ and $\mathrm{k}$  are of Type III, respectively, $\alpha$, in the terminology of \cite[ \S\ 15, Theorem 15.6, pp. 235-236]{B-C},  respectively, \cite[\S\ 2.1, Theorem 2.1, p. 4]{AMITA}. \\
 \item[$-$] If $\left(\frac{3}{p}\right)_3 = 1$, then  according to \cite[\S\ 1, Conjecture 1.1, p. 1]{CP}, we have $C_{\mathrm{k},3}\simeq \mathbb{Z}/3\mathbb{Z}\times \mathbb{Z}/3\mathbb{Z}$, which is impossible. We note that in this case, the fields $\Gamma$ and $\mathrm{k}$ are of Type I,  respectively, $\beta$, in the terminology of \cite[ \S\ 15, Theorem 15.6, pp. 235-236]{B-C},  respectively, \cite[\S\ 2.1, Theorem 2.1, p. 4]{AMITA}. \\
 For this case, we see that in \cite[ \S\ 17, Numerical Data, p. 238]{B-C}, and also in the tables of \cite{TABdata} which give  the class number of a pure cubic field, the prime numbers $p=61, 67, 103,$ and $151$, which are all congruous to $4 $ or $ 7 \ \pmod 9$, verify the following  properties:
\begin{itemize}
\item[(i)] $3$ is a residue cubic modulo $p$;
\item[(ii)] $3$ divide exactly the class number of  $\Gamma$;
\item[(iii)]  $u=3$;
\item[(iv)] $C_{\Gamma,3} \simeq \mathbb{Z}/3 \mathbb{Z}$, and $C_{\mathrm{k},3}\simeq \mathbb{Z}/3 \mathbb{Z}\times \mathbb{Z}/3 \mathbb{Z}.$
\end{itemize}
 \end{itemize}
$\bullet$ Case where $d=3^{e}p_1^{e_1} \not \equiv \pm 1 \pmod 9$, with $ p_1 \equiv 1 \pmod  9$:
$\\ $
Here $e,e_1 \in   \{1,2\}$. As $\mathbb{Q}(\sqrt[3]{ab^2})=\mathbb{Q}(\sqrt[3]{a^2b})$,
we can choose $e_1=1$, i.e. $d=3^{e}p_1$ with $e\in \lbrace 1,2 \rbrace$. On the one hand, the fact that  $ p_1 \equiv 1 \pmod  3$ implies by  \cite[Chap. 9, Sec. 1, Proposition 9.1.4, p.110]{Clas} that  $p_1=\pi_1 \pi_2$ with $\pi_1^{\tau}=\pi_2$ and $\pi_1 \equiv \pi_2 \equiv 1 \pmod {3\mathcal{O}_{\mathrm{k}_0}}$, the prime $p_1$ is totally ramified in $\Gamma$, then $\pi_1$ and $\pi_2$ are totally ramified in $\mathrm{k}$ and we have $\pi_1\mathcal{O}_{\mathrm{k}}=\mathcal{P}_1^3$
and $\pi_2 \mathcal{O}_{\mathrm{k}}=\mathcal{P}_2^3$, where $\mathcal{P}_1, \mathcal{P}_2$ are two prime ideals of $\mathrm{k}$.

We know that $3\mathcal{O}_{\mathrm{k}_0}=\lambda^2\mathcal{O}_{\mathrm{k}_0}$, with $\lambda=1-\zeta_3$ a prime element of $\mathrm{k}_0$. Since $d \not \equiv \pm 1 \pmod 9 $, then $3$ is totally ramified in $\Gamma$, and then $\lambda$ is ramified in $\mathrm{k}/\mathrm{k}_0$. Hence, the number of ideals which are ramified in $\mathrm{k}/\mathrm{k}_0$ is $t=3.$

On the other hand, from \cite[Chap. 9, Sec. 1, Proposition 9.1.4, p.110]{Clas} we have $3=-\zeta_3^2\lambda^2$, then $\mathrm{k}=\mathrm{k}_0(\sqrt[3]{x})$ with $x=\zeta_3^2\lambda^2 \pi_1 \pi_2$. The primes $\pi_1$ and $\pi_2$ are congruent to $ 1 \pmod {\lambda^3}$ because $ p_1 \equiv 1 \pmod  9$, then according to \cite[\S\ 3, Lemma 3.3, p. 17]{AMITA} we have $\zeta_3$ is a norm of an element of $\mathrm{k} \setminus \lbrace 0\rbrace$, so $q^{*}=1$. We conclude according to \cite[\S\ 5, p. 92]{GERTH1} that $\operatorname{rank}\,  C_{\mathrm{k},3}^{(\sigma)}=2$ which is impossible. \\
$\bullet$ Case where $d=3^{e}p_1^{e_1} \not \equiv \pm 1 \pmod 9$, with $ p_1 \equiv 4 \ \text{ or } \ 7  \pmod  9$: \\
Here $e,e_1 \in   \{1,2\}$.
So we can choose $e=1$, then $d=3p_1^{e_1}$ with $e_1\in \lbrace 1,2 \rbrace$.
As above we get $\pi_1$ and $\pi_2$, and $\lambda$ are ramified in $\mathrm{k}/\mathrm{k}_0$. \\
 Put $p\mathcal{O}_{\Gamma}=\mathcal{P}^3$,  $\pi_1\mathcal{O}_{\mathrm{k}}=\mathcal{P}_1^3$, $\pi_2 \mathcal{O}_{\mathrm{k}}=\mathcal{P}_2^3$ and $\lambda \mathcal{O}_{\mathrm{k}}=I^3$, where $\mathcal{P}$ is a prime ideal of $\Gamma$, and $\mathcal{P}_1, \mathcal{P}_2$  and $I$ are prime ideals of $\mathrm{k}$. According to \cite[\S\ 3.2, Theorem 3.5, pp 36-39]{Is}
we get $C_{k,3}$ is cyclic of order $3$ which contradict the fact that $C_{k,3}$ is of type $(9,3)$.\\
$\bullet$  Case where  $d=p_1^{e_1}q_1^{f_1} \equiv \pm1 \pmod 9$, with $ p_1 \equiv -q_1 \equiv 1  \pmod  9$ :
$\\ $
Since $d \equiv \pm1 \pmod  9$, then according to \cite[Sec. 4, pp. 51-54]{DED} we have $3$ is not ramified in the field $\Gamma$, so $\lambda=1-\zeta_3$ is not ramified in $\mathrm{k}/\mathrm{k}_0$.
 As  $ p_1 \equiv 1 \pmod  3$, then by \cite[Chap. 9, Sec. 1, Proposition 9.1.4, p.110]{Clas} $p_1=\pi_1 \pi_2$ with $\pi_1^{\tau}=\pi_2$ and $\pi_1 \equiv \pi_2 \equiv 1 \pmod {3\mathcal{O}_{\mathrm{k}_0}}$, so $\pi_1$ and $\pi_2$ are totally ramified in $\mathrm{k}$. Since  $ q_1 \equiv -1 \pmod  3$, then $q_1$ is inert in $\mathrm{k}_0$. Hence, the primes ramifies in $\mathrm{k}/\mathrm{k}_0$ are $\pi_1, \pi_2$ and $q_1$.  Put $x=\pi_1^{e_1} \pi_2^{e_1} \pi^{f_1}$, where $-q_1=\pi$ is a prime number of $\mathrm{k}_0$, then we have
 $\mathrm{k}=\mathrm{k}_0(\sqrt[3]{x})$.  The fact that $ p_1 \equiv -q_1 \equiv 1  \pmod  9$ imply that $\pi_1 \equiv \pi_2 \equiv \pi \equiv 1 \pmod {\lambda^3}$, then by  \cite[\S\ 3, Lemma 3.3, p. 17]{AMITA}, $\zeta_3$ is a norm of an element of $\mathrm{k}\setminus \lbrace 0\rbrace$, so $q^{*}=1$. Then by \cite[\S\ 5, p. 92]{GERTH1} we get $\operatorname{rank}\,   C_{\mathrm{k},3}^{(\sigma)}=t-2+q^{*}=2$ which is impossible.\\  

Finally, we have shown that if the $3$-class group $C_{\mathrm{k},3}$ is of type $(9,3)$, then $d=p^{e}$, where $p$ is a prime number such that $p \equiv 1 \pmod 9$ and $e=1$ or $2$. We can see that this result is compatible with the first form of the integer $d$ in \cite[\S\ 1, Theorem 1.1, p. 2]{AMITA}.\\
 (2)  Suppose that $d=p^{e}$, with $p$ is a prime number congruent to $1 \pmod 9$. Here $e \in \lbrace 1, 2 \rbrace $, since $\mathbb{Q}(\sqrt[3]{p})=\mathbb{Q}(\sqrt[3]{p^2})$ we can choose $e=1$.
 From \cite[\S\ 14, Theorem $14.1$, p. 232]{B-C}, we have
$h_{\mathrm{k}}=\frac{u}{3}\cdot h_{\Gamma}^{2}$, the fact that $u=1$ and that $ h_{\Gamma} $ is exactly divisible by $9$ implies that  $h_{\mathrm{k}}$  is exactly divisible by $27$. \\
Since $ h_{\Gamma} $ is exactly divisible by $9$, then by \cite[Lemma $5.11$]{Cal-Emer} we have $\operatorname{rank}\, C_{\mathrm{k},3}=2$. We conclude that $C_{\mathrm{k},3} \simeq \mathbb{Z}/9\mathbb{Z}\times
\mathbb{Z}/3\mathbb{Z}$.


\section{Numerical Examples}\label{Appendix}
Let $\Gamma=\mathbb{Q}(\sqrt[3]{ab^2})$ be a pure cubic field,
where $a$ and $b$ are coprime square-free integers.
We point out that $\mathbb{Q}(\sqrt[3]{ab^2})=\mathbb{Q}(\sqrt[3]{a^2b})$.
Assume that $\mathrm{k}=\mathbb{Q}(\sqrt[3]{ab^2},\zeta_3)$ and $C_{\mathrm{k},3}$ is of type $(9,3)$.
Using the system Pari \cite{PARI}, we compute class groups
for $b=1$ and a prime $a=p\equiv 1\pmod 9$.
The following table illustrates our main result Theorem \ref{T39} and Lemma \ref{39}.
Here we denote by:
\begin{eqnarray*}
C_{\Gamma,3} \ (\text{respectively,} \ C_{\mathrm{k},3}) &:&  \text{the 3-class group of} \ \Gamma \ (\text{respectively,} \ \mathrm{k});\\
h_{\Gamma,3} \ (\text{respectively,} \ h_{\mathrm{k},3}) &:&  \text{the 3-class number of} \ \Gamma \ (\text{respectively,} \ \mathrm{k}).
\end{eqnarray*}

\begin{center}
Table : Some fields $\mathbb{Q}(\sqrt[3]{p},\zeta_3)$ whose $3$-class group is of type \((9,3)\).
\end{center}
\begin{longtable}{| c | c | c | c | c | c | c | c |   }
\hline
  $p$ &  $p^2$ & $p \pmod  9$  & $h_{\Gamma,3}$  & $h_{\mathrm{k},3}$ & $u$ & $C_{\Gamma,3}$ & $C_{\mathrm{k},3}$ \\
\hline
\endhead
$199$   & $39601$  & $1$  & $9$ & $27$ & $1$ & $[9]$ & $[9, 3]$ \\
  $271$   & $73441$  & $1$  & $9$ & $27$ & $1$ & $[9]$ & $[9, 3]$ \\
  $487$   & $237169$  & $1$  & $9$ & $27$ & $1$ & $[9]$ & $[9, 3]$ \\
  $523$   & $273529$  & $1$  & $9$ & $27$ & $1$ & $[9]$ & $[9, 3]$ \\
  $1297$   & $1682209$  & $1$  & $9$ & $27$ & $1$ & $[9]$ & $[9, 3]$ \\
  $1621$   & $2627641$  & $1$  & $9$ & $27$ & $1$ & $[9]$ & $[9, 3]$ \\
  $1693$   & $2866249$  & $1$  & $9$ & $27$ & $1$ & $[9]$ & $[9, 3]$ \\
  $1747$   & $3052009$  & $1$  & $9$ & $27$ & $1$ & $[9]$ & $[9, 3]$ \\
  $1999$   & $3996001$  & $1$  & $9$ & $27$ & $1$ & $[9]$ & $[9, 3]$ \\
  $2017$   & $4068289$  & $1$  & $9$ & $27$ & $1$ & $[9]$ & $[9, 3]$ \\
  $2143$   & $4592449$  & $1$  & $9$ & $27$ & $1$ & $[9]$ & $[9, 3]$ \\
  $2377$   & $5650129$  & $1$  & $9$ & $27$ & $1$ & $[9]$ & $[9, 3]$ \\
  $2467$   & $6086089$  & $1$  & $9$ & $27$ & $1$ & $[9]$ & $[9, 3]$ \\
  $2593$   & $6723649$  & $1$  & $9$ & $27$ & $1$ & $[9]$ & $[9, 3]$ \\
  $2917$   & $8508889$  & $1$  & $9$ & $27$ & $1$ & $[9]$ & $[9, 3]$ \\
  $3511$   & $12327121$  & $1$  & $9$ & $27$ & $1$ & $[9]$ & $[9, 3]$ \\
  $3673$   & $13490929$  & $1$  & $9$ & $27$ & $1$ & $[9]$ & $[9, 3]$ \\
  $3727$   & $13890529$  & $1$  & $9$ & $27$ & $1$ & $[9]$ & $[9, 3]$ \\
  $3907$   & $15264649$  & $1$  & $9$ & $27$ & $1$ & $[9]$ & $[9, 3]$ \\
  $4159$   & $17297281$  & $1$  & $9$ & $27$ & $1$ & $[9]$ & $[9, 3]$ \\
  $4519$   & $20421361$  & $1$  & $9$ & $27$ & $1$ & $[9]$ & $[9, 3]$ \\
  $4591$   & $21077281$  & $1$  & $9$ & $27$ & $1$ & $[9]$ & $[9, 3]$ \\
  $4789$   & $22934521$  & $1$  & $9$ & $27$ & $1$ & $[9]$ & $[9, 3]$ \\
  $4933$   & $24334489$  & $1$  & $9$ & $27$ & $1$ & $[9]$ & $[9, 3]$ \\
  $4951$   & $24512401$  & $1$  & $9$ & $27$ & $1$ & $[9]$ & $[9, 3]$ \\
  $5059$   & $25593481$  & $1$  & $9$ & $27$ & $1$ & $[9]$ & $[9, 3]$ \\
  $5077$   & $25775929$  & $1$  & $9$ & $27$ & $1$ & $[9]$ & $[9, 3]$ \\
  $5347$   & $28590409$  & $1$  & $9$ & $27$ & $1$ & $[9]$ & $[9, 3]$ \\
  \hline
\end{longtable}

\begin{remark} Let $p$ is a prime number such that $p \equiv 1 \pmod 9$.
Let $\Gamma=\mathbb{Q}(\sqrt[3]{p})$,  $\mathrm{k}=\mathbb{Q}(\sqrt[3]{p},\zeta_3)$ be the normal closure of the pure cubic field $\Gamma$
and $C_{\mathrm{k},3}$ be the $3$-part of the class group of $\mathrm{k}$. If $9$ divide exactly the class number of $\mathbb{Q}(\sqrt[3]{p})$ and $u=1$, then according to  Theorem 1.1, the $3$-class group of $\mathbb{Q}(\sqrt[3]{p},\zeta_3)$ is of type $(9,3).$ Furthermore, if $3$ is not residue cubic modulo $p$, then a generators of $3$-class group of $\mathbb{Q}(\sqrt[3]{p},\zeta_3)$ can be deduced by \cite[\S\ 3, Theorem 3.2, p. 10]{GENArXiv}.
\end{remark}

\section{Acknowledgements} 
The authors would like to thank Professors Daniel C. Mayer and Mohammed Talbi who were of a great help concerning correcting the spelling mistakes
that gave more value to the work.

\begin{quote}

Abdelmalek AZIZI, Moulay Chrif ISMAILI and Siham AOUISSI \\
Department of Mathematics and Computer Sciences, \\
Mohammed first University, \\
60000 Oujda - Morocco, \\
abdelmalekazizi@yahoo.fr, mcismaili@yahoo.fr,  aouissi.siham@gmail.com. \\

Mohamed TALBI \\
Regional Center of Professions of Education and Training, \\
60000 Oujda - Morocco, \\
ksirat1971@gmail.com. \\

\end{quote}

\end{document}